\documentclass[12pt]{article}
\usepackage{amsmath}
\usepackage{amsfonts}
\usepackage{amssymb}
\usepackage{amsthm}
\usepackage{stackrel}

\usepackage{color}

\usepackage{figsize}

\usepackage{graphics}

\newcommand{\mc}{\mathcal }
\newtheorem{ex}{Example}[section]
\newtheorem{as}{Assumption}

\newtheorem{thm}{Theorem}[section]

\newcommand{\go}[1]{\mathfrak{#1}}

\newcommand{\R}{{\rm I}\kern-0.18em{\rm R}}
\newcommand{\1}{{\rm 1}\kern-0.25em{\rm I}}
\newcommand{\E}{{\rm I}\kern-0.18em{\rm E}}
\newcommand{\p}{{\rm I}\kern-0.18em{\rm P}}
\makeatletter

\def\@fnsymbol#1{\ensuremath{\ifcase#1\or a\or b\or c\or d\or \e\or f\or *\dagger 	\or \ddagger\ddagger \else\@ctrerr\fi}}

\title{A method of induction the distances with Hilbert structure}
\author{Vesna Gotovac\footnote{Department of Mathematics, Faculty of Science, University of Split, 21000 Split, Croatia}, Kate\v{r}ina Helisova \footnote{Department of Mathematics, Faculty of Electrical Engineering, Czech Technical University in Prague, Technick\'{a} 2, 166 27 Prague 6, Czech Republic},\\ Lev B. Klebanov\footnote{Department of Probability and Mathematical Statistics, Charles University, Sokolovska 83, 18675 Prague, Czech Republic, e-mail: levbkl@gmail.com} and Irina V. Volchenkova\footnotemark[2]\, \footnote{The order of authors is alphabetical and has no other sense}}

\date{}
\begin{document}

\maketitle

\begin{abstract} 
A method of induction the distances with Hilbert structure is proposed. Some properties of the method are studied. Typical examples of corresponding metric spaces are discussed.

\noindent 
{\bf Key words}: Hilbert spaces; metric spaces; isomtric embeddings into Hilbert spaces
\end{abstract}

\section{Introduction}\label{sec1} 
\setcounter{equation}{0}

Let $\{\mc X, D\}$ be a metric space. We say $D$ is a Hilbert-type distance if and only if there is an isometry from $\{\mc X, D\}$ on a subset of a Hilbert space. It is known (see \cite{Sch}) this property is equivalent to negative definiteness of $D^2$. Namely, a real function $\mc L$ from $\mc X^2$ such that $\mc L(x_1,x_2)=\mc L(x_2,x_1)$ is called negative definite kernel if for arbitrary positive integer $n$ and real numbers $c_1, \ldots , c_n$ satisfying to the condition $\sum_{j=1}^{n} c_j =0$ we have
\begin{equation}\label{eq0}
\sum_{i=1}^{n}\sum_{j=1}^{n} \mc L(x_i,x_j)c_ic_j \leq 0.
\end{equation}
That is, to proof that $D$ is a Hilbert-type distance it is sufficient to verify $\mc L =D^2$ satisfies the relation (\ref{eq0}). In the paper we provide a method of constructing Hilbert-type distance on a set $\mc X$ by using corresponding distance on image of $\mc X$ under a family of functions.

\section{The method of defining Hilbert-type distances}\label{sec2} 
\setcounter{equation}{0}

Let $\mc Z$ be a metric space with a distance $D$ on it. It is well-known that $\mc Z$ is isometric to a set of a Hilbert space if and only if $D^2(u,v)$ $(u,v \in Z)$ is negative definite kernel of $\mc Z^2$. Further on we suppose that $D$ possesses this property. 
Let $\mc X$ be an abstract set, and let $f_y(.), \; y\in {\mc Y}$ be a family of functions defined on $\mc X$ and taking values in $\mc Z$. Suppose that $\Xi$ is a probability measure on $\mc Y$. Define
\begin{equation}\label{eq1} 
\rho(x_1,x_2)= \Bigl(\int_{\mc Y} D^2(f_y(x_1),f_y(x_2))d\,\Xi(y)\Bigr)^{1/2}.
\end{equation}
Our goal is to show that under some assumptions $\rho$ is a distance on $\mc X$ such that $\rho^2$ is a negative definite kernel.

It is clear that for $x_1,x_2 \in {\mc X}$
\begin{enumerate} 
\item[A1.] $\rho(x_1,x_2) \geq 0$;
\item[A2.] $\rho(x_1,x_2) = 0$ if and only if 
$f_y(x_1)=f_y(x_2)$ for $\Xi$-almost all $y \in \mc Y$; 
\item[A3.] $\rho(x_1,x_2)=\rho(x_2,x_1)$.
\end{enumerate}
Take arbitrary positive integer $n$ and real numbers $c_1, \ldots , c_n$ satisfying to the condition $\sum_{j=1}^{n} c_j =0$. For arbitrary $x_1, \ldots , x_n \in \mc X$ we have
\begin{equation}\label{eq2}
\sum_{i=1}^{n}\sum_{j=1}^{n}\rho^2(x_i,x_j)c_ic_j= \int_{\mc Y}\Bigl( \sum_{i=1}^{n}\sum_{j=1}^{n} D^2(f_y(x_i),f_y(x_j))c_ic_j \Bigr) d\, \Xi(y) \leq 0
\end{equation}
in view of negative definiteness of the kernel $D^2$. Therefore, $\rho^2$ is negative definite kernel on $\mc X$. From this fact it follows that $\rho$ satisfies the triangle inequality (see, for example, \cite{Kl}):
\begin{enumerate} 
\item[A4.] $\rho(x_1,x_2) \leq \rho(x_1,x_3)+ \rho(x_3,x_2)$ for all $x_1,x_2,x_3 \in \mc X$.
\end{enumerate}

\begin{thm}\label{th1} 
Let $\mc Z$ be a metric space with a distance $D$ on it. Suppose that $D^2(u,v)$ $(u,v \in Z)$ is negative definite kernel of $\mc Z^2$. Let $\mc X$ be an abstract set, and let $f_y(.), \; y\in {\mc Y}$ be a family of functions defined on $\mc X$ and taking values in $\mc Z$. Suppose that $\Xi$ is a probability measure on $\mc Y$ such that 
\begin{equation}\label{eq3} 
f_y(x_1) =f_y(x_2)\; \text{for all} \; y\in \tt{supp} (\Xi) \; \text{implies} \; x_1=x_2,
\end{equation}
where, as usual, $\tt{supp} (\Xi)$ is support of the measure $\Xi$. Then the function $\rho$ defined by (\ref{eq1}) is a distance  on $\mc X$. Metric space $(\mc X, \rho)$ is isometric to a subset of a Hilbert space.
\end{thm}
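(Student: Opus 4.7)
The plan is to verify the four metric axioms for $\rho$ and then deduce the Hilbert-space embedding from Schoenberg's theorem. Non-negativity (A1), symmetry (A3), and the triangle inequality (A4) have already been established in the preamble, and the direction $x_1=x_2 \Rightarrow \rho(x_1,x_2)=0$ is immediate from (\ref{eq1}). The only remaining metric axiom is the converse definiteness $\rho(x_1,x_2)=0 \Rightarrow x_1=x_2$.

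To prove this implication, I would combine A2 with hypothesis (\ref{eq3}). By A2, $\rho(x_1,x_2)=0$ forces $f_y(x_1)=f_y(x_2)$ for $\Xi$-almost every $y$, so the exceptional set $N=\{y: f_y(x_1) \neq f_y(x_2)\}$ is $\Xi$-null. I would then argue that $N$ is disjoint from $\text{supp}(\Xi)$: the complement of $\text{supp}(\Xi)$ is the largest $\Xi$-null open set, and every point $y_0 \in \text{supp}(\Xi)$ has every open neighbourhood of positive $\Xi$-mass, so on $\text{supp}(\Xi)$ the coincidence $f_y(x_1)=f_y(x_2)$ must hold. Hypothesis (\ref{eq3}) then yields $x_1=x_2$.

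For the Hilbert embedding, no additional work is needed: (\ref{eq2}) already showed that $\rho^2$ is a negative definite kernel on $\mc X$, and Schoenberg's theorem \cite{Sch} (recalled in the introduction) asserts precisely that negative definiteness of the squared distance is equivalent to the existence of an isometry from $(\mc X,\rho)$ onto a subset of a Hilbert space.

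The main, and essentially the only, delicate point is the measure-theoretic bridge from $\Xi$-a.e.\ equality to equality on every point of $\text{supp}(\Xi)$ that is required to invoke (\ref{eq3}). In typical applications $\mc Y$ carries a topology and $y\mapsto f_y(x)$ is continuous for each fixed $x$, which makes the coincidence set closed and the bridge automatic; in the general abstract setting of the theorem (\ref{eq3}) should really be read in the "$\Xi$-almost all" sense, in which case the conclusion follows directly from A2 without further topological input.
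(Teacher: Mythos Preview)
Your approach is exactly the paper's: invoke A1--A4 together with hypothesis (\ref{eq3}) to get that $\rho$ is a metric, then apply Schoenberg's theorem via the negative-definiteness computation (\ref{eq2}). The paper's own proof is two sentences and does not address the subtlety you raise.

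One correction to your bridge argument: the claim ``$N=\{y:f_y(x_1)\neq f_y(x_2)\}$ is $\Xi$-null, hence disjoint from $\mathtt{supp}(\Xi)$'' is false as stated. A $\Xi$-null set can perfectly well meet the support (e.g.\ a single point under Lebesgue measure on $[0,1]$); what is true is that a $\Xi$-null \emph{open} set lies in the complement of the support. So the argument goes through only once you know $N$ is open, i.e.\ once $y\mapsto D(f_y(x_1),f_y(x_2))$ is continuous---precisely the continuity hypothesis you mention afterwards. Your final diagnosis is therefore the right one: without topological regularity of the family $\{f_y\}$, condition (\ref{eq3}) must be read in the $\Xi$-a.e.\ sense for the theorem to hold, and in that reading A2 gives the conclusion directly. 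The paper silently conflates these two readings.
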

\begin{proof}
Properties $A1. - A4.$ together with (\ref{eq3}) show that $\rho$ is a distance on $\mc X$. Because $\rho^2$ is a negative definite kernel the conclusion of the Theorem follows from I.J. Schoenberg's Theorem (\cite{Sch}, see also \cite{Kl}).
\end{proof}

\begin{ex}\label{ex1} 
Let $\mc X$ be a subset of a vector space $\mc H$ and $\mc Y$ is a subspace of algebraic conjugate $\mc X^{\prime}$. Suppose that on $\mc X^{\prime}$ with a $\sigma$-field of its subsets there exists a measure $\Xi$ such that
\[ \langle x^{\prime},x_1\rangle  = \langle x^{\prime},x_2\rangle \; \text{for all}\; x^{\prime}\; \text{implies}\;x_1 = x_2. \]
Then there exists a Hilbert-type distance on $\mc X$. 
\end{ex}
\begin{proof}
It is sufficient to apply Theorem \ref{th1} to
$\mc Y = \mc X^{\prime}$ and $D(u,v)=|u-v|$ for $u,v \in \mc Z =\R^1$.
\end{proof}

Let us note that the distance
\[ \rho(x_1,x_2) =\Bigl(\int_{\mc Y} (\langle x^{\prime},x_1\rangle-\langle x^{\prime},x_2\rangle)^2 d\,\Xi(x^{\prime})\Bigr)^{1/2} \]
on $\mc X$ induces a norm on $\mc X$. Namely,
\[ \|x\| = \rho(x,0), \; x\in \mc X. \]
Corresponding inner product may be calculated as
\[ (x_1,x_2)= \int_{\mc Y} \langle x^{\prime},x_1\rangle\cdot \langle x^{\prime},x_2\rangle d\,\Xi(x^{\prime}). \]
Basing on this, we can say that $\mc X$ is isometric to linear subspace of a Hilbert space. However, we cannot state that $\mc X$ is complete.

The conditions of Example \ref{ex1} are close to necessary. Really, let $\mc X$ be a subset of a separable Hilbert space $\go H$. We may take as $\mc Y$ the dual space of all continuous linear functionals on $\go H$. Of course, in this situation there exists corresponding measure $\Xi$ possessing desirable properties. Namely, from the proof of the result of \cite{Ito} it follows that we can choose $\Xi$ as a Gaussian measure with
$\tt supp (\Xi)={\go H}$.

The facts given by Example \ref{ex1} and after it show that the condition of existence of inner product on a linear space may be changed by the condition of existence of suitable measure $\Xi$ on reach enough subset of the dual space.

\section{The method of defining $L^m$-type distances}\label{sec3} 
\setcounter{equation}{0}

Let $\mc X$ be an abstract set, and $m$ be an even integer greater than $1$. Assume that $\mc L(x_1, \ldots , x_m)$ is a real continuous function on $\mc X^m$ symmetric with respect to permutations of its arguments. We say that $\mc L$ is an {\it $m$-negative definite kernel} (see \cite{ZKK}) if for any integer $n \geq 1$, any collection of points $x_1, \ldots ,x_n \in \mc X$, and any collection of real numbers $h_1, \ldots , h_n$ satisfying the condition $\sum_{j=1}^{n}h_j =0$, the following inequality holds:
\begin{equation}\label{eq4}
(-1)^{m/2}\sum_{i_1=1}^{n} \ldots \sum_{i_m=1}^{n} \mc L(x_{i_1},\ldots ,x_{i_m})h_{i_1}\cdots h_{i_m} \geq 0.
\end{equation}
For the case $m=2$ we have the case of negative definite kernel. If the equality in (\ref{eq4}) implies that $h_1=\ldots =h_n=0$, then we call $\mc L$ {\it strictly $m$-negative definite kernel}. Equivalent form of (\ref{eq4}) is
\begin{equation}\label{eq5}
(-1)^{m/2}\int_{\mc X} \ldots \int_{\mc X} \mc L (x_1, \ldots ,x_m)h(x_1)\cdots h(x_m) dQ(x_1) \ldots dQ(x_m)\geq 0
\end{equation}
for any measure $Q$ and any integrable function $h(x)$ such that
\begin{equation}\label{eq6}
 \int_{\mc X} h(x) dQ(x) =0.
\end{equation}
We call $\mc L$ {\it strongly $m$-negative definite kernel} if the equality in (\ref{eq6}) holds for $h=0$ $Q$-almost everywhere only.

Let $\mc X$ be an abstract set, and let $f_y(.), \; y\in {\mc Y}$ be a family of functions defined on $\mc X$ and taking values in a set $\mc Z$. Suppose that $\mc L$ is $m$-negative definite kernel on $\mc Z^m$ and $\Xi$ is a probability measure on $\mc Y$. Define
\begin{equation}\label{eq7}
\go R_m (x_1, \ldots ,x_m)=\int_{\mc Y} \mc L (f_y(x_1),\ldots ,f_y(x_m))d\,\Xi(y).
\end{equation}
It is easy to see that $\go R_m$ is $m$-negative definite kernel on $\mc X^m$.

\begin{as} 
If 
\[(-1)^{m/2}\sum_{i_1=1}^{n} \ldots \sum_{i_m=1}^{n} \mc L(f_y(x_{i_1}),\ldots ,f_y(x_{i_m}))h_{i_1}\cdots h_{i_m} =0 \] \[\text{for}\; Q-\;\text{almost all} \; y\in \mc Y \] 
implies that
\[(-1)^{m/2}\sum_{i_1=1}^{n} \ldots \sum_{i_m=1}^{n} \mc L(x_{i_1},\ldots ,x_{i_m})h_{i_1}\cdots h_{i_m} =0 \]
then strictly $m$-negativeness of $\mc L$ implies strictly $m$-negativeness of $\go R$.
\end{as}

Similarly statements with integrals instead of sums are true for strong negative definiteness. We omit precise formulation.

Suppose, as before, that $m$ is an even integer greater than $1$. Let $D_m$ is a distance on $\mc Z$. From the results of \cite{ZKK} (see also \cite{Kl}) it follows that $(\mc Z,D_m)$ is isometric to a subset of $L^{m}$ space if and only if 
\begin{equation}\label{eq8}
D_m(u,v) = \Bigl( (-1)^{m/2}\mc L (u-v,\ldots , u-v)\Bigr)^{1/m},\quad u,v \in \mc Z,
\end{equation}
where $\mc L (u_1,\ldots ,u_m)$ is a strictly negative definite kernel on $\mc Z^m$. Therefore, under Assumption 1 and if $D_m(u,v)$ has the form (\ref{eq8}) then
\begin{equation}\label{eq9}
\rho_m(s,t) = \Bigl( \go R_m(s-t,\ldots ,s-t)\Bigr)^{1/m}, \quad s,t \in \mc X
\end{equation}
is a distance on $\mc X$, where $L$ is strictly $m$-negative definite kernel used in (\ref{eq8}). The set $\mc X$ with the distance $\rho_m$ is isometric to a subset of $L^m$ space.

\section{Acknowledgment}
The work was partially supported by Grant GACR 16-03708S.

\end{document}